\documentclass[12pt]{article}
\usepackage{a4}
\usepackage{amsthm}
\usepackage{amsfonts}
\usepackage{amssymb}
\usepackage{amsmath}
\usepackage{cite}
\usepackage{epsfig}
\usepackage{xcolor}

\newcommand\NN{{\mathbb N}}
\newcommand\RR{{\mathbb R}}

\newcommand\cut[1]{\|#1\|_{\square}}
\newcommand{\Sp}{\operatorname{Sp}}
\newcommand{\Tr}{\operatorname{Tr}}
\newcommand{\Ker}{\operatorname{Ker}}
\newcommand{\Aut}{\operatorname{Aut}}
\newcommand{\dd}{\;\mathrm{d}}
\newcommand{\Id}{\operatorname{Id}}

\newtheorem{theorem}{Theorem}

\newtheorem{lemma}[theorem]{Lemma}

\DeclareMathOperator{\op}{op}

\DeclareTextCompositeCommand{\v}{OT1}{l}{l\nobreak\hspace{-.1em}'}
\DeclareTextCompositeCommand{\v}{OT1}{t}{t\nobreak\hspace{-.1em}'\nobreak\hspace{-.15em}}

\begin{document}
\title{Convergence of spectra of digraph limits}
\author{Jan Greb\'\i{}k\thanks{Institute of Mathematics, Leipzig University, Augustusplatz 10, 04109 Leipzig, Germany. E-mail: {\tt grebikj@gmail.com}. Previous affiliation: Faculty of Informatics, Masaryk University, Botanick\'a 68A, 602 00 Brno, Czech Republic. This author was supported by by MSCA Postdoctoral Fellowships 2022 HORIZON-MSCA-2022-PF-01-01 project BORCA grant agreement number 101105722.}\and
        Daniel Kr{\'a}\v l\thanks{Institute of Mathematics, Leipzig University, Augustusplatz 10, 04109 Leipzig, and Max Planck Institute for Mathematics in the Sciences, Inselstra{\ss}e 22, 04103 Leipzig, Germany. E-mail: {\tt daniel.kral@uni-leipzig.de}. Previous affiliation: Faculty of Informatics, Masaryk University, Botanick\'a 68A, 602 00 Brno, Czech Republic. This author was supported by the Alexander von Humboldt Foundation in the framework of the Alexander von Humboldt Professorship endowed by the Federal Ministry of Education and Research.}\and
	Xizhi Liu\thanks{School of Mathematical Sciences, University of Science and Technology of China, Hefei, China. E-mail: {\tt liuxizhi@ustc.edu.cn}. Previous affiliation: Mathematics Institute and DIMAP, University of Warwick, Coventry CV4 7AL, United Kingdom. This author was supported by the ERC Advanced Grant 101020255 FDC.}\and
        Oleg Pikhurko\thanks{Mathematics Institute and DIMAP, University of Warwick, Coventry CV4 7AL, United Kingdom. E-mail: {\tt O.Pikhurko@warwick.ac.uk}. This author was supported by the ERC Advanced Grant 101020255 FDC.}\and
	Julia Slipantschuk\thanks{Department of Mathematics, University of Bayreuth, D-95440 Bayreuth, Germany. This author was supported by ERC-Advanced Grant 833802 Resonances.}}

\date{} 
\maketitle

\begin{abstract}
The relation between densities of cycles and the spectrum of a graphon,
which implies that the spectra of convergent graphons converge,
fundamentally relies on the self-adjointness of the linear operator associated with a graphon.
In this short paper,
we consider the setting of digraphons, which are limits of directed graphs, and
prove that the spectra of convergent digraphons converge.
Using this result, we establish the relation between densities of directed cycles and the spectrum of a digraphon.
\end{abstract}

\section{Introduction}
\label{sec:intro}

Spectral methods play an important role in combinatorics and particularly in graph theory~\cite{GodR01}.
It is well known that
the trace of the $\ell$-th power of the adjacency matrix of an (undirected or directed) graph $G$
is equal to the number of closed walks of length $\ell$ in $G$.
Consequently, the number of closed walks of length $\ell$
is equal to the sum of the $\ell$-th powers of the eigenvalues of $G$,
i.e., the eigenvalues of its adjacency matrix.

The theory of combinatorial limits provides analytic tools to represent and to study large combinatorial objects.
In the setting of undirected graphs,
a large graph is represented by an analytic object called a \emph{graphon},
which is a symmetric measurable function $W:[0,1]^2\to [0,1]$.
In this setting, we speak of a density of a graph $G$ in $W$,
which means the (limit) density of $G$ in large graphs represented by $W$;
we refer to Section~\ref{sec:prelim} for further details and
to the monograph~\cite{Lov12} for a comprehensive treatment of the matter.
The spectrum of $W$ viewed as a Hilbert-Schmidt integral operator is either finite or countably infinite and
the sum of the $\ell$-th powers of the non-zero eigenvalues of $W$ (counting their geometric multiplicities)
is equal~\cite[Section 6]{BorCLSV12} to the density of cycles of length $\ell$ in the graphon $W$,
see also~\cite[Section 7.5]{Lov12}.
The proofs from~\cite{BorCLSV12,Lov12} of this equality fundamentally exploit the fact that $W$ is a self-adjoint operator.
This equality between the density of cycles and the sum of the $\ell$-th powers of non-zero eigenvalues of a graphon
implies that
the normalized spectra of any sequence of graphs $G$ converging to a graphon $W$
converge to the spectrum of $W$~\cite[Section 6]{BorCLSV12}, see also~\cite[Section 11.6]{Lov12}.

In this short paper, we consider the asymmetric setting of directed graphs
where the methods used in the symmetric setting of undirected graphs fail to work
because the associated operators need not be self-adjoint.
The motivation for considering this setting comes from applications in extremal combinatorics.
In the setting of tournament limits, which have been initially studied in~\cite{ChaGKN19,Tho18,ZhaZ20},
spectral properties of large tournaments were studied in relation to maximizing the density of directed cycles
in~\cite{GrzKLV23,MaT22}.
It can be shown~\cite{GrzKLV23} that
the normalized spectra of any convergent sequence of tournaments converge, however, 
the link between the limit of the normalized spectra and the spectrum of the limit object (tournamenton) has been missing.
We employ methods from functional analysis to establish this link (Theorem~\ref{thm:spectrum} in Section~\ref{sec:spectrum}), and
we then use this link to show that
the sum of the $\ell$-th powers of the non-zero eigenvalues of a digraphon $W$ (counting their algebraic multiplicities)
is equal to the density of directed cycles of length $\ell$ in the digraphon $W$ (Theorem~\ref{thm:cycle} in Section~\ref{sec:cycle}).
If proven earlier,
the latter would make the spectral arguments used in~\cite{GrzKLV23,MaT22} more straightforward,
see also~\cite[Section~1]{HlaS25} for the discussion of this matter, as
it would be possible to work directly with the spectrum of the limit tournamenton rather than the limit of the multisets
representing the spectra of tournaments.

\section{Preliminaries}
\label{sec:prelim}

In this section,
we briefly overview the notation used throughout the paper and
review basic properties of digraph limits.
We start with some general notation.
The set of the first $k$ positive integers is denoted by $[k]$.
If $X$ is a set of complex numbers and $z$ is a complex number,
then $zX$ is the set $\{zx:x\in X\}$ and, if $z$ is non-zero, $X/z$ is the set $\{x/z:x\in X\}$.
If $x\in\mathbb{C}$ and $\epsilon$ is a positive real,
we write $\mathcal{B}_\epsilon(\lambda)$ for the open $\epsilon$-neighborhood of $\lambda$.
We say that a sequence $(X_n)_{n\in\NN}$ of subsets of a metric space converges to a set $X$ in the \emph{Hausdorff sense}
if for every $\varepsilon>0$, there exists $n_0$ such that the following holds for every $n\ge n_0$:
\begin{itemize}
\item for every $x\in X_n$, there exists $x'\in X$ at distance at most $\varepsilon$ from $x$, and
\item for every $x\in X$, there exists $x'\in X_n$ at distance at most $\varepsilon$ from $x$.
\end{itemize}
We remark that the limit set $X$ is not unique unless $X$ is required to be closed.

Directed graphs, \emph{digraphs} for short, that are considered throughout this paper
contain neither parallel edges (directed either way) nor loops unless stated otherwise.
A particular digraph is the cyclically oriented cycle of length $\ell$, which is denoted by $C_{\ell}$.
If $G$ is a digraph, then $|G|$ denotes the number of vertices of $G$, and
$V(G)$ and $E(G)$ the vertex set and the edge set of $G$, respectively.
The \emph{adjacency matrix} $A_G$ of $G$ is a $|G|\times |G|$ square matrix with rows and columns labeled by the vertices of $G$;
the entry of $A_G$ in the row labeled by a vertex $u$ and in the column labeled by a vertex $v$
is equal to $1$ if and only if $G$ contains an edge directed from $u$ to $v$.
The \emph{spectrum} of a digraph $G$ is the spectrum of its adjacency matrix $A_G$ and
is denoted by $\Sp(G)$;
we write $m_G(\lambda)$ for the algebraic multiplicity of the eigenvalue $\lambda\in\Sp(G)$.
The \emph{normalized spectrum} of $G$ is $\Sp(G)/|G|$.

The \emph{subgraph density} of a digraph $H$ in a digraph $G$, which is denoted by $d(H,G)$,
is the probability that $|H|$ randomly chosen vertices of $G$ induce a subgraph isomorphic to $H$;
if $|H|>|G|$, we set $d(H,G)$ to $0$.
A sequence $(G_n)_{n\in\NN}$ of digraphs is \emph{convergent}
if $|G_n|$ tends to infinity and the sequence $d(H,G_n)$ converges for every $H$.
In Section~\ref{sec:cycle}, we will need the notion of the homomorphism density defined as follows:
the \emph{homomorphism density} of a digraph $H$ in a digraph $G$, which is denoted by $t(H,G)$,
is the probability that a uniformly chosen random mapping $f$ from $V(H)$ to $V(G)$ is a homomorphism,
i.e., $f(u)f(v)\in E(G)$ whenever $uv\in E(H)$.
It is easy to show, see e.g. \cite[Section~5.2.3]{Lov12}, that a sequence $(G_n)_{n\in\NN}$ of digraphs is convergent if and only if
$|G_n|$ tends to infinity and $t(H,G_n)$ converges for every digraph $H$.
Observe (recall that $C_{\ell}$ is the cyclically oriented cycle of length $\ell$) that
\[t(C_\ell,G)|G|^{\ell}=\Tr A_G^{\ell}=\sum_{\lambda\in\Sp(G)}m_G(\lambda)\lambda^{\ell}.\]

Convergent sequences of digraphs can be represented by an analytic object,
which is analogous to graphons in the setting of graphs studied in~\cite{BorCL10,BorCLSSV06,BorCLSV08,BorCLSV12,LovS06,LovS10}.
We remark that limits of tournaments were used earlier in~\cite{ChaGKN19,Tho18,ZhaZ20},
see also~\cite{DiaJ08,LovS10i} for related concepts.
A \emph{kernel} is a bounded measurable function $W:[0,1]^2\to\RR$ and
a \emph{digraphon} is a non-negative kernel $W$ such that $W(x,y)+W(y,x)\le 1$ for all $(x,y)\in [0,1]^2$.

If $W$ is a digraphon,
then the \emph{$n$-vertex $W$-random digraph} is obtained as follows:
choose $x_1,\ldots,x_n$ uniformly in $[0,1]$ and
include an edge directed from the $i$-th vertex to the $j$-th vertex with probability $W(x_i,x_j)$,
an edge directed from the $j$-th vertex to the $i$-th vertex with probability $W(x_j,x_i)$, and
no edge between the $i$-th vertex and the $j$-th vertex with probability $1-W(x_i,x_j)-W(x_j,x_i)$,
with all choices being mutually independent.
The \emph{subgraph density} of a digraph $H$ in a digraphon $W$, which is denoted by $d(H,W)$,
is the probability that a $|H|$-vertex $W$-random digraph is isomorphic to $H$.
In particular, it holds that
\[
\scalebox{0.85}{$
d(H,W)=\frac{|V(H)|!}{|\Aut(H)|}\int\limits_{[0,1]^{V(H)}}\prod_{uv\in E(H)}W(x_u,x_v)\prod\limits_{\substack{uv\not\in E(H)\\vu\not\in E(H)}}\left(1-W(x_u,x_v)-W(x_v,x_u)\right)\dd x_{V(H)}$},
\]
where $\Aut(H)$ is the automorphism group of $H$.
The definition of the \emph{homomorphism density} extends to digraphons as follows:
\begin{equation}
t(H,W)=\int\limits_{[0,1]^{V(H)}}\prod_{uv\in E(H)}W(x_u,x_v)\dd x_{V(H)}.
\label{eq:tHW}
\end{equation}
\noindent If $(G_n)_{n\in\NN}$ is a convergent sequence of digraphs,
we say that $W$ is a limit of $(G_n)_{n\in\NN}$ if $d(H,G_n)$ converges to $d(H,W)$ for every digraph $H$.
It is easy to show that
if a sequence $(G_n)_{n\in\NN}$ of digraphs is convergent,
then $W$ is a limit digraphon of the sequence $(G_n)_{n\in\NN}$
if and only if $t(H,W)$ is the limit of $t(H,G_n)$ for every digraph $H$.
The same argument as in the case of (undirected) graphs, e.g. as in~\cite[Theorem~2.2]{LovS06}, yields that
there exists a limit digraphon for every convergent sequence of digraphs.
In the other direction, a sequence of $W$-random digraphs with increasing number of vertices
is convergent and $W$ is its limit with probability one.

Two digraphons $W_1$ and $W_2$ are \emph{weakly isomorphic}
if $d(H,W_1)=d(H,W_2)$ for every digraph $H$,
i.e., $W_1$ and $W_2$ are limits of the same set of convergent sequences of digraphs.
If $W$ is a digraphon and
$\varphi:[0,1]\to [0,1]$ is a measure preserving map (unless stated otherwise, we always consider the Lebesgue measure),
then $W^\varphi$ is the digraphon defined as $W^\varphi(x,y)=W(\varphi(x),\varphi(y))$;
note that the digraphons $W$ and $W^\varphi$ are weakly isomorphic.
It can be shown as in the case of (undirected) graphs~\cite[Theorem~2.1]{BorCL10} that
if $W_1$ and $W_2$ are weakly isomorphic digraphons,
then there exist measure preserving maps $\varphi$ and $\psi$ such that
$W_1^\varphi$ and $W_2^\psi$ are equal almost everywhere.

A digraphon $W$ is a \emph{$k$-step} digraphon if there exists a partition of $[0,1]$ into $k$ measurable sets $J_1,\ldots,J_k$ such that
$W$ is constant on $J_i\times J_j$ for all $i,j\in [k]$.
If $G$ is a $k$-vertex digraph, we can associate with $G$ a $k$-step digraphon $W$ such that
each set $J_i$, $i\in [k]$, is an interval of length $1/k$ and
the value of $W$ on $J_i\times J_j$ is equal to the entry of the adjacency matrix $A_G$ in the $i$-th row and the $j$-th column.
We define the \emph{cut norm} of a kernel $W$ as
\[\cut{W}=\sup_{g,h: [0,1]\to [0,1]}\left|\int\limits_{\ [0,1]^2} g(x)W(x,y)h(y)\dd x\dd y\right|,\]
where the supremum is taken over measurable functions $g,h: [0,1]\to [0,1]$.
Equivalently, the cut norm can be defined using the supremum over characteristic functions only,
i.e., measurable functions $g,h:[0,1]\to\{0,1\}$;
it is easy to check that these two definitions are equivalent,
see e.g.~\cite[Lemma~8.10]{Lov12} for the analogous argument in the case of symmetric kernels.
We remark that $\cut{W}=0$ if and only if $W$ is equal to $0$ almost everywhere.
This defines the \emph{cut metric} on the space of digraphons by setting $d_{\square}(W_1,W_2)=\cut{W_1-W_2}$.
The \emph{cut distance} of two digraphons $W_1$ and $W_2$ is defined as
\[\delta_{\square}(W_1,W_2)=\inf_{\varphi} d_{\square}(W_1,W_2^\varphi)\]
where the infimum is taken over all measure preserving maps $\varphi:[0,1]\to [0,1]$.
Following the steps of the corresponding proof for graphons (given in e.g.~\cite[Theorem~11.3]{Lov12}),
it can be shown that if $(G_n)_{n\in\NN}$ is a convergent sequence of digraphs,
then the sequence of the associated step digraphons converges in the cut distance.
Vice versa, if $(G_n)_{n\in\NN}$ is a sequence of digraphs with $|G_n|$ tending to infinity and
the sequence of the associated step digraphons converges in the cut distance,
then the sequence $(G_n)_{n\in\NN}$ is convergent.

\section{Spectrum convergence}
\label{sec:spectrum}

We can associate a digraphon, or more generally a kernel, $W$
with a Hilbert-Schmidt integral operator $T_W$ on the complex Hilbert space $L_2[0,1]$,
where $[0,1]$ is endowed with the Lebesgue measure.
In particular, the operator $T_W$ associated with a kernel $W$ is defined for $f\in L_2[0,1]$ as
\[T_W(f)(x)=\int_{[0,1]} W(x,y) f(y) \dd y.\]
\noindent In what follows, we will use $W$ both for a kernel and the associated operator, which is denoted $T_W$ above;
we believe that this simplifies the notation while there is no risk of confusion.

We now recall some basic definitions and facts from functional analysis
while referring to the monograph by Ahues, Largillier and Limaye~\cite{AhuLL01}.
The operator norm for operators on $L_2[0,1]$ is denoted by $\|{-}\|_{2\to 2}$, and
$\Id$ denotes the identity operator.
For any kernel $W$,
the operator $W$ is compact \cite[Section~7.5]{Lov12}, hence bounded.
The spectrum $\Sp(W)$ is the set of all $\lambda\in \mathbb{C}$ such that
$W-\lambda\Id$ does not have a bounded inverse.
Since $W$ is a compact operator,
every \emph{non-zero} point $\lambda$ of $\Sp(W)$ is isolated and an eigenvalue, i.e., $\dim \Ker \left(W-\lambda\Id\right)>0$,
see~\cite[Remark~1.34]{AhuLL01}.
The \emph{algebraic multiplicity} of a non-zero eigenvalue $\lambda$ of $W$, denoted by $m_W(\lambda)$, is
\[m_W(\lambda)=\lim_{k\to\infty} \dim \Ker \left( (W-\lambda\Id)^k\right).\]
We remark that
this definition of the algebraic multiplicity for a non-zero eigenvalue $\lambda$ in the setting of digraphons
is equivalent to the standard definition of the algebraic multiplicity of an isolated spectral point of an operator
as the rank of the respective Riesz projection (see~\cite[Chapter~1]{AhuLL01}).
The compactness of $W$ and \cite[Proposition~1.31]{AhuLL01} yield that
every non-zero point $\lambda$ of $\Sp(W)$ has finite algebraic multiplicity, see also~\cite[Remark 1.34]{AhuLL01}.
Since the operator $W$ does not admit a bounded inverse as $L_2[0,1]$ is infinite-dimensional and $W$ is compact,
the value $0$ is contained in $\Sp(W)$, but it need not be an eigenvalue.
It can be shown that if $W$ is a digraphon and $\varphi:[0,1]\to [0,1]$ a measure preserving map,
then $\Sp(W)=\Sp(W^\varphi)$ and
$m_W(\lambda)=m_{W^\varphi}(\lambda)$ for every $\lambda\in\Sp(W)\setminus\{0\}$.

In our argument,
we need a statement on the convergence of spectra of operators on complex Banach spaces with algebraic multiplicities,
which we formulate as a lemma.
In order to do that, we need to recall a definition of a less standard mode of convergence introduced in \cite[Chapter~2.1]{AhuLL01}.
A sequence $(T_n)_{n\in\NN}$ of bounded linear operators on a complex Banach space \emph{$\nu$-converges} to a linear operator $T$
if their operator norms are uniformly bounded,
\[\lim_{n\to\infty}\|(T_n-T)T\|_{\op}=0 \qquad\mbox{and}\qquad \lim_{n\to\infty}\|(T_n-T)T_n\|_{\op}=0.\]
The following statement can be found in~\cite[Corollary 2.13(ii)]{AhuLL01}.

\begin{lemma}
\label{lm:cor213}
Let $(T_n)_{n\in\NN}$ be a sequence of bounded linear operators on a complex Banach space that $\nu$-converges to $T$.
The following holds for every non-zero $\lambda\in\Sp(T)$ that
is an isolated point of $\Sp(T)$ and has finite algebraic multiplicity.

For any $\varepsilon>0$ such that the $\Sp(T)\cap B_{2\varepsilon}(\lambda)=\{\lambda\}$,
there exists $n_0$ such that it holds for every $n\ge n_0$ that
\[\Sp(T_n)\cap B_{\varepsilon}(\lambda)\not=\emptyset
  \qquad\mbox{and}\qquad
  m_T(\lambda)=\sum_{\lambda'\in \Sp(T_n)\cap B_{\varepsilon}(\lambda)}m_{T_n}(\lambda').\]
\end{lemma}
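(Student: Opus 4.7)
The plan is to invoke the Riesz functional calculus and study spectral projections. Fix $\varepsilon>0$ as in the statement, and let $\Gamma$ denote the positively oriented circle of radius $\varepsilon$ centered at $\lambda$; since $\Sp(T)\cap\mathcal{B}_{2\varepsilon}(\lambda)=\{\lambda\}$, the resolvent $R(z,T)=(z\Id-T)^{-1}$ is well defined and norm-continuous on $\Gamma$, so the spectral projection
$$P=\frac{1}{2\pi i}\oint_\Gamma R(z,T)\dd z$$
is a bounded projection onto the generalized eigenspace of $\lambda$, satisfying $\dim\operatorname{range}(P)=m_T(\lambda)$ and commuting with $T$.

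The first step is to show that for all sufficiently large $n$, the circle $\Gamma$ lies in the resolvent set of $T_n$ with $\|R(z,T_n)\|$ uniformly bounded for $z\in\Gamma$. This is the main obstacle: $\nu$-convergence does not in general entail operator-norm convergence of $T_n$ to $T$, so the usual resolvent perturbation estimate must be refined. The idea is to exploit that $T$ commutes with $R(z,T)$ and that, for $z\ne 0$, iterating the identity $R(z,T)=z^{-1}\Id+z^{-1}TR(z,T)$ introduces factors of $T$ next to $T_n-T$ in manipulations of
$$R(z,T_n)-R(z,T)=R(z,T_n)(T_n-T)R(z,T).$$
This allows one to bound the relevant terms using $\|(T_n-T)T\|$ and $\|(T_n-T)T_n\|$ together with the uniform bound on $\|T_n\|$, all of which are controlled by the hypothesis; a Neumann-type argument then yields the uniform invertibility of $z\Id-T_n$ on $\Gamma$.

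Once this is in place, set $P_n=\frac{1}{2\pi i}\oint_\Gamma R(z,T_n)\dd z$ for large $n$. Integrating the resolvent identity above converts the resolvent bounds on $\Gamma$ into $\|(P-P_n)P\|\to 0$ and $\|(P-P_n)P_n\|\to 0$, which is exactly the form of convergence needed for the classical rank-stability argument for Riesz projections: the restriction of $P$ to $\operatorname{range}(P_n)$ is an isomorphism onto $\operatorname{range}(P)$ for large $n$, forcing $\dim\operatorname{range}(P_n)=\dim\operatorname{range}(P)=m_T(\lambda)$. Because $P_n$ is the Riesz projection onto the sum of generalized eigenspaces of $T_n$ for eigenvalues inside $\Gamma$, its rank equals $\sum_{\lambda'\in\Sp(T_n)\cap\mathcal{B}_\varepsilon(\lambda)}m_{T_n}(\lambda')$, yielding the claimed equality and in particular forcing $\Sp(T_n)\cap\mathcal{B}_\varepsilon(\lambda)$ to be non-empty.
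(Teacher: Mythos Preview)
The paper does not supply its own proof of this lemma; it is quoted verbatim from \cite[Corollary~2.13]{AhuLL01}. Your outline is precisely the standard argument found in that reference: Riesz projections along a contour $\Gamma$ separating $\lambda$ from the rest of $\Sp(T)$, uniform resolvent bounds for $T_n$ on $\Gamma$ extracted from $\nu$-convergence, and rank stability of the resulting projections.

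The one place where your sketch is genuinely thin is the ``Neumann-type argument'' for the invertibility of $z\Id-T_n$ on $\Gamma$. Writing $z\Id-T_n=(z\Id-T)(\Id-S_n)$ with $S_n=R(z,T)(T_n-T)$, the point is \emph{not} that $\|S_n\|$ is small (it need not be under $\nu$-convergence) but that $\|S_n^2\|\to 0$ uniformly on $\Gamma$: expanding $R(z,T)=z^{-1}\Id+z^{-1}TR(z,T)$ gives
\[
(T_n-T)R(z,T)(T_n-T)=z^{-1}(T_n-T)^2+z^{-1}(T_n-T)T\,R(z,T)(T_n-T),
\]
and both $(T_n-T)^2=(T_n-T)T_n-(T_n-T)T$ and $(T_n-T)T$ tend to $0$ in norm by hypothesis. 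Once $\|S_n^2\|<1$ one has $(\Id-S_n)^{-1}=(\Id+S_n)(\Id-S_n^2)^{-1}$ with a uniform bound, giving the required control on $R(z,T_n)$. The same mechanism (inserting a factor of $T$ or $T_n$ via the resolvent expansion) yields $\|(P-P_n)P\|\to 0$ and $\|(P-P_n)P_n\|\to 0$, after which your rank-stability conclusion is standard. So your proposal is correct and essentially reproduces the proof in the source the paper cites.
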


In Lemma~\ref{lm:converge}, we will show that every convergent (in the cut metric) sequence of digraphons is $\nu$-convergent.
We remark that a more well-known version of Lemma~\ref{lm:cor213},
see also~\cite[Corollary 2.13(i)]{AhuLL01},
holds under the assumption that $T_n$ converges to $T$ in $\|{-}\|_{\op}$.
Janson~\cite[Lemma~E.6]{Jan13} proved that
every convergent (in the cut metric) sequence of \emph{graphons} is $\|{-}\|_{2\to 2}$-convergent;
the argument also extends to the case of digraphons~\cite{JanPC}
but since the details are not publicly available,
we include a self-contained argument using the weaker notion of $\nu$-convergence.
We also remark that the extension of~\cite[Lemma~E.6]{Jan13} to digraphons was claimed in~\cite{Hon20}, however,
we were not able to verify the correctness of all claims in~\cite{Hon20},

To prove Lemma~\ref{lm:converge}, we need the following auxiliary statement.

\begin{lemma}
\label{lm:cutnu}
Let $V$ and $U$ be kernels such that $\|V\|_\infty\le 1$ and $\|U\|_\infty\le 1$.
Then it holds that
\[\|VU\|_{2\to 2}\le 2\cut{V}^{1/2}.\]
\end{lemma}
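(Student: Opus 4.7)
The plan is to bound $\|VUf\|_2$ uniformly over $f\in L_2[0,1]$ with $\|f\|_2\le 1$. Since $V$ and $U$ are real-valued, splitting $f = f_r + if_i$ gives $\|VUf\|_2^2 = \|VUf_r\|_2^2 + \|VUf_i\|_2^2$, so $\|VU\|_{2\to 2}$ is already attained on real-valued test functions and I may assume $f$ real from the outset.

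Set $\phi := Uf$. Cauchy--Schwarz together with $\|U\|_\infty\le 1$ gives
\[|\phi(y)| \le \|U(y,\cdot)\|_2\,\|f\|_2 \le 1\]
for every $y\in[0,1]$, and the same computation applied to $V\phi$ yields $\|V\phi\|_\infty \le 1$. The pointwise inequality $(V\phi)(x)^2 \le \|V\phi\|_\infty\cdot|(V\phi)(x)|$ then shows
\[\|VUf\|_2^2 = \|V\phi\|_2^2 \le \|V\phi\|_1.\]

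The heart of the argument is the bound $\|V\phi\|_1 \le 4\cut{V}$. By $L^1$--$L^\infty$ duality,
\[\|V\phi\|_1 = \int_{[0,1]^2} g(x)V(x,y)\phi(y)\dd x\dd y\]
for the sign function $g(x) := \operatorname{sgn}((V\phi)(x)) \in\{-1,0,1\}$. Writing $g = g^+ - g^-$ with $g^\pm:[0,1]\to\{0,1\}$ and, using $\|\phi\|_\infty\le 1$, decomposing $\phi = \phi^+ - \phi^-$ with $\phi^\pm:[0,1]\to[0,1]$, the integral splits into four terms, each bounded in absolute value by $\cut{V}$ directly from the definition. Combining the estimates yields $\|VUf\|_2^2 \le 4\cut{V}$; taking the supremum over $f$ proves the lemma.

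The only non-routine point is the last step: it is crucial that one has the \emph{pointwise} bound $\|\phi\|_\infty \le 1$ (not merely $\|\phi\|_2 \le 1$), so that $\phi$ can be expressed as a difference of two $[0,1]$-valued functions and paired against the cut-norm test functions. This pointwise bound is precisely what the hypothesis $\|U\|_\infty\le 1$ provides, via Cauchy--Schwarz; without it, one would only recover an estimate through the Hilbert--Schmidt norm, which does not involve $\cut{V}$.
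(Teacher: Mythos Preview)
Your proof is correct and follows essentially the same strategy as the paper: reduce to real $f$ with $\|f\|_2\le 1$, use $\|U\|_\infty\le 1$ to obtain $\|Uf\|_\infty\le 1$ and $\|VUf\|_\infty\le 1$, then decompose both $Uf$ and (a version of) $VUf$ into differences of $[0,1]$-valued functions and bound the resulting four bilinear integrals by $\cut{V}$. The only cosmetic difference is that the paper writes $\|VUf\|_2^2$ directly as the bilinear form $\int (h_1-h_2)V(g_1-g_2)$ with $h_i=(VUf)^\pm$, whereas you insert the intermediate estimate $\|VUf\|_2^2\le\|VUf\|_1$ and then dualize with the sign function; this amounts to the same decomposition.
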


\begin{proof}
We will show that
\begin{equation}
\|VUf\|_2\le 2\cut{V}^{1/2}
\label{eq:cutnu}
\end{equation}
for any function $f:[0,1]\to\RR$ with $\|f\|_2=1$.
Note that 
since the values of $V$ and $U$ are real,
\eqref{eq:cutnu} implies that
\begin{align*}
\|VU(f+\iota g)\|_2^2
& = \|VUf\|_2^2+\|VUg\|_2^2\\
& \le 4\cut{V}\;\|f\|_2^2+4\cut{V}\;\|g\|_2^2\\
& = 4\cut{V}\;\|f+\iota g\|_2^2
\end{align*}
for any functions $f:[0,1]\to\RR$ and $g:[0,1]\to\RR$ (we use $\iota$ for the imaginary unit),
which will yield the statement of the lemma.

We now prove \eqref{eq:cutnu}.
Fix a function $f:[0,1]\to\RR$ such that $\|f\|_2=1$.
Since $\|U\|_\infty\le 1$, it holds that $\|Uf\|_{\infty}\le \|f\|_1\le \|f\|_2= 1$;
similarly, it holds that $\|VUf\|_{\infty}\le 1$.
We next define auxiliary functions $g_1$, $g_2$, $h_1$ and $h_2$ from $[0,1]$ to $[0,1]$ as follows:
\begin{align*}
g_1(x) & = \max\{0,(Uf)(x)\},\\
g_2(x) & = \max\{0,-(Uf)(x)\},\\
h_1(x) & = \max\{0,(VUf)(x)\}, \mbox{ and}\\
h_2(x) & = \max\{0,-(VUf)(x)\}.
\end{align*}
Observe that
\[\|VUf\|_2^2=\int\limits_{[0,1]^2} (h_1-h_2)(x)V(x,y)(g_1-g_2)(y)\dd x\dd y.\]
Using the definition of the cut metric,
we obtain that
\[\left|\;\int\limits_{[0,1]^2} h_i(x)V(x,y)g_j(y)\dd x\dd y\right|\le\cut{V}\]
for every $i,j\in\{1,2\}$.
The triangle inequality implies that
\[\|VUf\|_2^2
  \le\sum_{i,j\in [2]}\left|\int\limits_{[0,1]^2} h_i(x)V(x,y)g_j(y)\dd x\dd y\right|
  \le 4\cut{V},\]
which yields that $\|VUf\|_2\le 2\cut{V}^{1/2}$.
\end{proof}

We are now ready to show that
every sequence of digraphons convergent in the cut metric is also $\nu$-convergent;
we state the lemma more generally for kernels.

\begin{lemma}
\label{lm:converge}
If a sequence $(W_n)_{n\in\NN}$ of kernels satisfies $\|W_n\|_\infty\le 1$ for every $n\in \NN$ and converges to a kernel $W$ in the cut metric,
then the sequence $(W_n)_{n\in\NN}$ also $\nu$-converges to $W$.
\end{lemma}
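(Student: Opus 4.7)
The plan is to verify directly the three defining conditions of $\nu$-convergence from~\cite[Chapter~2.1]{AhuLL01}: a uniform bound on $\|W_n\|_{2\to 2}$, the vanishing of $\|(W_n-W)W\|_{2\to 2}$, and the vanishing of $\|(W_n-W)W_n\|_{2\to 2}$. The first is a direct Hilbert-Schmidt estimate, and the other two will be reduced to cut norm convergence via Lemma~\ref{lm:cutnu}.

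For the uniform boundedness, I would simply observe that
\[\|W_n\|_{2\to 2}\le \|W_n\|_{L^2([0,1]^2)}\le \|W_n\|_\infty\le 1,\]
so the operator norms of $W_n$ are uniformly bounded by~$1$. Before applying Lemma~\ref{lm:cutnu}, I would also establish the almost-everywhere bound $\|W\|_\infty\le 1$ on the limit kernel. This follows from a short Lebesgue density argument: if $|W|>1+\varepsilon$ on a set of positive measure, then around a point of density of that set one can find a small product rectangle $A\times B$ on which the average of $W$ exceeds $1+\varepsilon/2$, while the average of each $W_n$ is at most~$1$; testing $\cut{W-W_n}$ against the indicators of $A$ and $B$ then yields a positive lower bound independent of $n$, contradicting the hypothesis.

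For the two remaining conditions, I would write $W_n-W=2V_n$ with $\|V_n\|_\infty\le 1$. Applying Lemma~\ref{lm:cutnu} first to the pair $(V_n,W)$ and then to $(V_n,W_n)$ yields
\[\|(W_n-W)W\|_{2\to 2}=2\|V_n W\|_{2\to 2}\le 4\cut{V_n}^{1/2}=2\sqrt{2}\,\cut{W_n-W}^{1/2},\]
and an identical upper bound for $\|(W_n-W)W_n\|_{2\to 2}$. Since $\cut{W_n-W}\to 0$ by hypothesis, both quantities tend to zero, which completes the verification of $\nu$-convergence.

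I do not expect a serious obstacle: once Lemma~\ref{lm:cutnu} is at hand, $\nu$-convergence becomes essentially a matter of rescaling and taking the square root of cut norm convergence. The only mildly delicate step is the a.e.\ bound $\|W\|_\infty\le 1$, which is needed so that Lemma~\ref{lm:cutnu} applies with its stated constants; this is settled by the Lebesgue density argument sketched above.
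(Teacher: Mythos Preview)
Your proposal is correct and follows essentially the same approach as the paper: uniform boundedness via $\|W_n\|_\infty\le 1$, the bound $\|W\|_\infty\le 1$ via Lebesgue density, and then Lemma~\ref{lm:cutnu} applied with $U=W$ and $U=W_n$ to conclude $\nu$-convergence. Your rescaling $W_n-W=2V_n$ is in fact slightly more careful than the paper's own write-up, which applies Lemma~\ref{lm:cutnu} directly to $V=W_n-W$ without adjusting for the factor of~$2$ in the $L^\infty$ bound.
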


\begin{proof}
Fix a sequence $(W_n)_{n\in\NN}$ of kernels and a digraphon $W$ as in the statement of the lemma and observe that $\|W_n\|_{2\to 2}\le 1$ for every $n\in \mathbb{N}$.
Since the sequence $(W_n)_{n\in\NN}$ converges in the cut metric,
for every $\varepsilon>0$,
there exists $n_0\in\NN$ such that $\cut{W-W_n}\le\varepsilon^2/4$ for every $n\ge n_0$.
In addition,
the Lebesgue density theorem yields that $\|W\|_\infty\le 2$,
which implies that $\|W-W_n\|_\infty\le 1$ for every $n\in\NN$.
Lemma~\ref{lm:cutnu} now yields that
$\|(W_n-W)W'\|_{2\to 2}\le\varepsilon$ for any kernel $W'$ with $\|W'\|_\infty\le 1$ and any $n\ge n_0$.
In particular, it holds that
\[\|(W_n-W)W\|_{2\to 2}\le\varepsilon\qquad\mbox{and}\qquad \|(W_n-W)W_n\|_{2\to 2}\le\varepsilon\]
for every $n\ge n_0$ and
so the sequence $(W_n)_{n\in\NN}$ $\nu$-converges to $W$.
\end{proof}

We are now ready to prove the next lemma,
which forms the core of the proofs of Theorems~\ref{thm:spectrum} and~\ref{thm:spectrumW}.

\begin{lemma}
\label{lm:spectrum}
Let $(W_n)_{n\in\NN}$ be a sequence of digraphons that converge in the cut metric to a digraphon $W$.
The spectrum $\Sp(W)$ is the unique closed set that is the limit of the spectra $\Sp(W_n)$ in the Hausdorff sense, and
for every non-zero $\lambda\in \Sp(W)$ we have that $m_W(\lambda)$ is eventually equal to the sum of the algebraic multiplicities of the eigenvalues in $\Sp(W_n)$ converging to $\lambda$,
i.e.~for every non-zero $\lambda\in \Sp(W)$, there exists $\varepsilon_0>0$ such that for every $0<\varepsilon<\varepsilon_0$ there is $n_0\in\NN$ such that
\[m_W(\lambda)=\sum_{\lambda'\in\Sp(W_n)\cap B_{\varepsilon}(\lambda)}m_{W_n}(\lambda')\]
for every $n\ge n_0$.
\end{lemma}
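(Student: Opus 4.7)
The plan is to combine the two preparatory lemmas: Lemma~\ref{lm:converge} upgrades convergence in the cut metric to $\nu$-convergence, and Lemma~\ref{lm:cor213} then controls the behaviour of isolated nonzero eigenvalues of $W$ under this convergence. Applying Lemma~\ref{lm:converge} to $(W_n)_{n\in\NN}$ and $W$, we obtain that $(W_n)_{n\in\NN}$ $\nu$-converges to $W$. Since $W$ is a Hilbert-Schmidt, hence compact, operator on $L_2[0,1]$, every nonzero $\lambda \in \Sp(W)$ is isolated in $\Sp(W)$ with finite algebraic multiplicity. Given such $\lambda$, we may choose $\varepsilon_0>0$ so that $B_{2\varepsilon_0}(\lambda) \cap \Sp(W)=\{\lambda\}$, and Lemma~\ref{lm:cor213} applied with any $0<\varepsilon<\varepsilon_0$ immediately gives
\[m_W(\lambda)=\sum_{\lambda' \in \Sp(W_n) \cap B_\varepsilon(\lambda)} m_{W_n}(\lambda')\]
for all sufficiently large $n$, which is exactly the multiplicity part of the statement.

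It remains to establish Hausdorff convergence. Both $\Sp(W)$ and every $\Sp(W_n)$ lie in $\overline{B_1(0)}$ since $\|W\|_{2\to 2}, \|W_n\|_{2\to 2}\le 1$. For the direction that every $\lambda \in \Sp(W)$ is eventually approximated by points of $\Sp(W_n)$, the nonzero eigenvalues are handled by the previous paragraph, while $0 \in \Sp(W)$ is trivially approximated since $0 \in \Sp(W_n)$ for every $n$ (each $W_n$ is compact on the infinite-dimensional space $L_2[0,1]$, so $0$ must lie in its spectrum). Uniformity in $\lambda$ follows from compactness of $W$: only finitely many eigenvalues of $W$ have modulus at least $\varepsilon/2$, and Lemma~\ref{lm:cor213} can be applied to each of these finitely many simultaneously, while the remaining eigenvalues are approximated by $0 \in \Sp(W_n)$.

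The main obstacle will be the reverse direction, namely that no eigenvalue of $W_n$ can drift away from $\Sp(W)$. I would argue by contradiction: suppose that for some $\varepsilon>0$ there exist $\mu_k \in \Sp(W_{n_k})$ with $\mathrm{dist}(\mu_k, \Sp(W)) \ge \varepsilon$ along a subsequence $n_k\to\infty$. Compactness of $\overline{B_1(0)}$ then yields a further subsequence converging to some $\mu \in \overline{B_1(0)} \setminus \Sp(W)$, which is necessarily nonzero. Since $W-\mu\Id$ has a bounded inverse, one wants to deduce from the $\nu$-convergence that $W_{n_k}-\mu\Id$ is eventually invertible, contradicting $\mu_k \in \Sp(W_{n_k})$ with $\mu_k \to \mu$. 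This upper semicontinuity of the spectrum under $\nu$-convergence is part of the standard spectral theory developed in~\cite[Chapter~2]{AhuLL01}, but transferring the quantitative resolvent bound at $\mu$ for $W$ to the operators $W_{n_k}$ via the $\nu$-convergence $(W_n) \to W$ is the technically most delicate step, and the place where one has to pay attention to the weaker nature of $\nu$-convergence compared to norm convergence.
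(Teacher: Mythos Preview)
Your approach is correct and essentially the same as the paper's: both use Lemma~\ref{lm:converge} to pass to $\nu$-convergence and then invoke Lemma~\ref{lm:cor213} for the multiplicity statement, exactly as you do. The only difference is in the Hausdorff convergence: where you sketch the two halves and flag the upper semicontinuity under $\nu$-convergence as the delicate step deferred to~\cite[Chapter~2]{AhuLL01}, the paper dispatches the entire Hausdorff convergence in one line by citing~\cite[Proposition~3.7]{SanD16}, so the ``main obstacle'' you worry about is handled by an external black box rather than argued out.
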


\begin{proof}
By Lemma~\ref{lm:converge}, the sequence $(W_n)_{n\in\NN}$ $\nu$-converges to $W$ and
so the sets $\Sp(W_n)$, $n\in\NN$, converge to $\Sp(W)$ in the Hausdorff sense by~\cite[Proposition~3.7]{SanD16}.
Since $W$ is a compact operator,
every non-zero element of $\Sp(W)$ is an eigenvalue of $W$ with finite algebraic multiplicity and is isolated in $\Sp(W)$.
Consequently, the set $\Sp(W)$ is closed as $0\in\Sp(W)$.
Lemma~\ref{lm:cor213} yields that
for every non-zero element $\lambda$ of $\Sp(W)$,
$m_{W}(\lambda)$
is eventually equal to the sum of the algebraic multiplicities of the eigenvalues in $\Sp(W_n)$ converging to $\lambda$,
i.e., if $\lambda$ is a non-zero element of $\Sp(W)$,
then there exists $\varepsilon_0>0$ such that for every $\varepsilon\in(0,\varepsilon_0)$
there exists $n_0\in\NN$ such that $\Sp(W_n)\cap B_{\varepsilon}(\lambda)$ is non-empty for every $n\ge n_0$.
\end{proof}

We are now ready to prove the main theorem of this section.

\begin{theorem}
\label{thm:spectrum}
Let $(G_n)_{n\in\NN}$ be a convergent sequence of digraphs and let $W$ be the limit digraphon.
The normalized spectra of $G_n$ converge to the spectrum $\Sp(W)$ in the Hausdorff sense.
Moreover, for every non-zero $\lambda\in \Sp(W)$ we have that $m_W(\lambda)$
is eventually equal to the sum of the algebraic multiplicities of the normalized eigenvalues in $\Sp(G_n)$ converging to $\lambda$.
\end{theorem}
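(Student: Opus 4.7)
The plan is to apply Lemma~\ref{lm:spectrum} to the sequence of step digraphons $W_{G_n}$ associated to $G_n$, and then to transfer the conclusion from $\Sp(W_{G_n})$ to the normalized spectra $\Sp(G_n)/|G_n|$. The first input is already in place: as noted in the preliminaries, the step digraphons associated to a convergent sequence of digraphs converge in the cut metric to the limit digraphon $W$, so Lemma~\ref{lm:spectrum} applies to $(W_{G_n})_{n\in\NN}$.

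The key bridge is to show that the non-zero parts of $\Sp(W_{G_n})$ and $\Sp(G_n)/|G_n|$ coincide as multisets, where multiplicities are the algebraic ones. Writing $k=|G_n|$ and $e_i=\sqrt{k}\,\mathbf{1}_{J_i}$, the subspace $P$ spanned by $\{e_i\}_{i\in[k]}$ is $k$-dimensional, $T_{W_{G_n}}$ sends $L_2[0,1]$ into $P$, and a direct computation shows that the restriction of $T_{W_{G_n}}$ to $P$ is represented in the orthonormal basis $\{e_i\}$ by the matrix $A_{G_n}/k$. To identify algebraic multiplicities I would observe that if $\mu\neq 0$ and $f\in\Ker((T_{W_{G_n}}-\mu\Id)^r)$, then expanding the binomial and using $T_{W_{G_n}}^j f\in P$ for $j\ge 1$ forces $f\in P$; hence the generalized eigenspace for a non-zero $\mu$ coincides with that of $A_{G_n}/k$, and the non-zero parts of the spectra (with algebraic multiplicities) match.

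With this identification in hand, the multiplicity statement is immediate from Lemma~\ref{lm:spectrum}: for a non-zero $\lambda\in\Sp(W)$, pick $\varepsilon_0$ so small that $B_{\varepsilon_0}(\lambda)$ does not contain $0$; then for $\varepsilon<\varepsilon_0$ and $n$ large, the eigenvalues of $W_{G_n}$ in $B_\varepsilon(\lambda)$ are non-zero and therefore correspond exactly to normalized eigenvalues of $A_{G_n}$ with identical algebraic multiplicities, and their multiplicities sum to $m_W(\lambda)$.

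The one residual issue, and the step I expect to require the most care, is the Hausdorff convergence at the point $0\in\Sp(W)$: Lemma~\ref{lm:spectrum} gives $\Sp(W_{G_n})\to\Sp(W)$ in the Hausdorff sense, but $\Sp(W_{G_n})$ automatically contains $0$ while $\Sp(G_n)/|G_n|$ need not. The direction that points of $\Sp(G_n)/|G_n|$ are eventually close to $\Sp(W)$ is immediate since $\Sp(G_n)/|G_n|\setminus\{0\}\subseteq\Sp(W_{G_n})$. For the reverse direction I would use a Frobenius-norm count: since $A_{G_n}$ has $0/1$ entries,
\[\sum_{\lambda\in\Sp(G_n)} |\lambda|^2 \le \|A_{G_n}\|_F^2 \le |G_n|^2,\]
so the number of normalized eigenvalues outside $B_\varepsilon(0)$ is at most $1/\varepsilon^2$. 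As $|G_n|\to\infty$, for every $\varepsilon>0$ and all sufficiently large $n$ there must be a normalized eigenvalue inside $B_\varepsilon(0)$, which supplies the needed approximation of $0\in\Sp(W)$ by $\Sp(G_n)/|G_n|$ and completes the Hausdorff convergence.
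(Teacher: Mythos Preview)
Your overall strategy matches the paper's: pass to the step digraphons, invoke Lemma~\ref{lm:spectrum}, identify the non-zero part of $\Sp(W_{G_n})$ with $\Sp(G_n)/|G_n|$ (including algebraic multiplicities), and then deal separately with the point $0$. Your argument that generalized eigenvectors for a non-zero eigenvalue must lie in the finite-dimensional span $P$ is correct and is exactly the content of the paper's remark that ``the spectrum of $W_n$ is exactly the normalized spectrum of $G_n$ with $0$ possibly added, and the algebraic multiplicity of every non-zero $\lambda\in\Sp(W_n)$ is equal to the algebraic multiplicity of $\lambda\cdot|G_n|$ in $\Sp(G_n)$.''

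One small imprecision: the preliminaries only give convergence of the step digraphons in the cut \emph{distance}, not the cut \emph{metric}, whereas Lemma~\ref{lm:spectrum} is stated for the cut metric. The paper handles this explicitly by reordering the vertices of each $G_n$ so that the step digraphons converge in the cut metric to some $W_0$, and then arguing via weak isomorphism that $\Sp(W_0)=\Sp(W)$ with matching multiplicities. You should make this reduction (or the equivalent one via measure-preserving maps applied to $W_{G_n}$) explicit.

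The genuinely different step is your treatment of the approximation of $0\in\Sp(W)$. The paper argues indirectly: the set $X$ of eigenvalues of $W$ of modulus at least $\varepsilon$ is finite with total multiplicity $M$; by the multiplicity conclusion of Lemma~\ref{lm:spectrum} and Hausdorff convergence, eventually exactly $M$ eigenvalues of $G_n$ (counted with multiplicity) lie near $|G_n|X$ and none of the others have normalized modulus $\ge\varepsilon$; since $|G_n|>M$ eventually, a small eigenvalue must exist. Your route via the Schur inequality $\sum_\lambda |\lambda|^2\le\|A_{G_n}\|_F^2\le |G_n|^2$ is shorter and self-contained: it bounds the number of normalized eigenvalues outside $B_\varepsilon(0)$ by $1/\varepsilon^2$ directly, without appealing to the spectral structure of $W$ at all. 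This is a legitimate simplification; in fact the paper uses essentially the same singular-value bound later, as inequality~\eqref{eq:singular} in the proof of Theorem~\ref{thm:cycle}, so your approach just brings that tool forward.
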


\begin{proof}
Fix a convergent sequence $(G_n)_{n\in\NN}$ of digraphs.
As in the graph case~\cite[Lemma 5.3]{BorCLSV08},
it is possible to order vertices of $G_n$ in a way that
the sequence $(W_n)_{n\in\NN}$ of the associated step digraphons converges in the cut metric;
let $W_0$ be the digraphon that is the limit of $(W_n)_{n\in\NN}$ in the cut metric.
Since $W$ and $W_0$ are weakly isomorphic,
there exist measure preserving transformations $\varphi$ and $\psi$ such that
$W^\varphi$ and $W_0^\psi$ are equal almost everywhere.
It follows that $\Sp(W)=\Sp(W_0)$ and
$m_{W}(\lambda)=m_{W_0}(\lambda)$ for every $\lambda\in\Sp(W)\setminus\{0\}$.
Hence, it is enough to prove the statement of the theorem for the digraphon $W_0$.

Lemma~\ref{lm:spectrum} implies that
the spectrum of $W_0$ is the limit of the spectra of $W_n$ in the Hausdorff sense and
$m_{W_0}(\lambda)$
is eventually equal to the sum of the algebraic multiplicities of the eigenvalues in $\Sp(G_n)$ converging to $\lambda$.
Observe that the spectrum of $W_n$ is exactly the normalized spectrum of $G_n$ with $0$ possibly added, and
the algebraic multiplicity of every non-zero $\lambda\in\Sp(W_n)$
is equal to the algebraic multiplicity of $\lambda\cdot|G_n|$ in $\Sp(G_n)$.
Note that $\Sp(W_0)$ contains $0$ but $\Sp(G_n)$ need not contain $0$ unlike $\Sp(W_n)$.
Hence,
in order to show that the sets $\Sp(G_n)/|G_n|$, $n\in\NN$, converge to $\Sp(W_0)$ in the Hausdorff sense,
we need to show that for every $\varepsilon>0$,
there exists $n_0\in\NN$ such that $\Sp(G_n)/|G_n|\cap B_{\varepsilon}(0)$ is non-empty for every $n\ge n_0$

Consider $\varepsilon>0$ and let $X$ be the set of all $\lambda\in\Sp(W_0)$ with $|\lambda|\ge\varepsilon$.
Since every non-zero element of the closed set $\Sp(W_0)$ is isolated and $\|W_0\|_{2\to 2}\le 1$,
the set $X$ is finite and
there exists $\delta>0$ such that $\Sp(W_0)\cap B_{\delta}(\lambda)=\{\lambda\}$ for every $\lambda\in X$;
let $X'$ be the union of $B_{\delta/2}(\lambda)$, $\lambda\in X$ (note that the sets $B_{\delta/2}(\lambda)$
are disjoint by the choice of $\delta$).
Further, let $n_0$ be such that
\[m_{W_0}(\lambda)=\sum_{\lambda'\in\Sp(W_n)\cap B_{\delta/2}(\lambda)}m_{W_n}(\lambda')\]
for every $\lambda\in X$ and $n\ge n_0$.
If there are infinitely many $n\in\NN$ such that there exists $\lambda\in(\Sp(G_n)/|G_n|)\setminus X'$ with $|\lambda|\ge\varepsilon$,
then there exists $\lambda\not\in X$ with $|\lambda|\ge\varepsilon$ that
is a limit point of $\Sp(G_n)/|G_n|$ and so of $\Sp(W_n)$.
However, such $\lambda$ must be contained in $\Sp(W_0)$ (as it is the limit of $\Sp(W_n)$ in the Hausdorff sense),
which is impossible as $\lambda\not\in X$.
We conclude that there exists $n'_0\ge n_0$ such that
$(\Sp(G_n)/|G_n|)\setminus X'$ has no element $\lambda$ with $|\lambda|\ge\varepsilon$.

We now use that $X$ is finite and that
for every $\lambda\in X$,
$m_{W_0}(\lambda)$ is the sum of the algebraic multiplicities of eigenvalues converging to $\lambda$.
Since the sum of $m_{W_0}(\lambda)$ taken over $\lambda\in X$ is finite,
this implies that $\Sp(G_n)/|G_n|$ when $|G_n|$ is sufficiently large
must contain an eigenvalue $\lambda$ with $|\lambda|<\varepsilon$.
Formally, set $M$ to be the sum of $m_{W_0}(\lambda)$, $\lambda\in X$, and
let $n''_0\ge n'_0$ be such that every graph $G_n$, $n\ge n''_0$, has at least $M+1$ vertices.
Since the sum of algebraic multiplicities of $\lambda\in\Sp(G_n)\cap (|G_n|X')$ is exactly $M$ for every $n\ge n_0$,
the set $(\Sp(G_n)/|G_n|)\setminus X'$ has no element $\lambda$ with $|\lambda|\ge\varepsilon$ for every $n\ge n'_0$, and
$G_n$ has at least $M+1$ vertices for every $n\ge n''_0$,
it follows that $\Sp(G_n)/|G_n|$ contains $\lambda$ with $|\lambda|<\varepsilon$ for every $n\ge n''_0$.
In particular, $\Sp(G_n)/|G_n|\cap B_{\varepsilon}(0)$ is non-empty for every $n\ge n''_0$.
\end{proof}

As a digraphon analogy to Theorem~\ref{thm:spectrum},
it is possible to prove that the spectra of digraphons that converge in the cut distance
converge to the spectrum of a limit digraphon.

\begin{theorem}
\label{thm:spectrumW}
Let $(W_n)_{n\in\NN}$ be a sequence of digraphons that
converges to a digraphon $W$ in the cut distance.
Then, the spectra $\Sp(W_n)$ converge to the spectrum $\Sp(W)$ in the Hausdorff sense.
Moreover, for every non-zero $\lambda\in \Sp(W)$ we have that $m_W(\lambda)$ is eventually equal to the sum of the algebraic multiplicities of the eigenvalues in $\Sp(W_n)$ converging to $\lambda$.
\end{theorem}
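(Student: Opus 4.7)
The plan is to reduce Theorem~\ref{thm:spectrumW} to Lemma~\ref{lm:spectrum} by transferring the measure-preserving transformations from the fixed digraphon $W$ onto the approximating digraphons $W_n$, thereby producing a sequence that converges to $W$ in the cut metric (rather than only in the cut distance). Since $\delta_\square(W_n,W)\to 0$, for each $n$ I would first choose a near-optimal measure-preserving map $\varphi_n$ with $\cut{W_n-W^{\varphi_n}}\le \delta_\square(W_n,W)+1/n$, so that $\cut{W_n-W^{\varphi_n}}\to 0$.

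The next step is to replace the maps $\varphi_n$ acting on $W$ by measure-preserving maps $\psi_n$ acting on $W_n$, producing a sequence with $\cut{W_n^{\psi_n}-W}\to 0$. This rests on the symmetry of the cut distance: for any measure-preserving bijection $\varphi$ a straightforward change of variables yields the identity $\cut{W_n-W^{\varphi}}=\cut{W_n^{\varphi^{-1}}-W}$ (the cut norm being invariant under simultaneous pullback), and to handle non-bijective measure-preserving maps one uses the density, in the cut metric, of the orbit of $W$ under measure-preserving bijections inside its orbit under arbitrary measure-preserving maps---a standard fact in the theory of graph/digraph limits, proved analogously to the reference~\cite[Theorem~2.1]{BorCL10} that the preliminaries already invoke.

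Once $\cut{W_n^{\psi_n}-W}\to 0$ has been arranged, I would apply Lemma~\ref{lm:spectrum} to the sequence $(W_n^{\psi_n})_{n\in\NN}$, which immediately yields that $\Sp(W_n^{\psi_n})\to\Sp(W)$ in the Hausdorff sense and that for every nonzero $\lambda\in\Sp(W)$ the value $m_W(\lambda)$ coincides with the sum of the algebraic multiplicities of the eigenvalues of $W_n^{\psi_n}$ in a sufficiently small ball $B_\varepsilon(\lambda)$, for all sufficiently large $n$. The argument then concludes by invoking the measure-preserving invariance of spectra and multiplicities recorded in the preliminaries: $\Sp(W_n^{\psi_n})=\Sp(W_n)$ and $m_{W_n^{\psi_n}}(\lambda')=m_{W_n}(\lambda')$ for every nonzero $\lambda'$.

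The main obstacle I expect to be the symmetry step. A more direct alternative that sidesteps it would be to revisit the argument behind \cite[Corollary~2.13]{AhuLL01} (i.e., Lemma~\ref{lm:cor213}) and verify that it extends when the limit operator is allowed to vary within the orbit $\{W^\varphi:\varphi\text{ measure-preserving}\}$: setting $V_n:=W^{\varphi_n}$, Lemma~\ref{lm:cutnu} directly supplies $\|(W_n-V_n)V_n\|_{2\to 2}\to 0$ and $\|(W_n-V_n)W_n\|_{2\to 2}\to 0$, while the resolvent bounds entering the contour-integral argument depend only on the fixed spectrum $\Sp(V_n)=\Sp(W)$. Either route would then deliver the Hausdorff convergence and the matching of algebraic multiplicities asserted by the theorem.
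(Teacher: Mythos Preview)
Your proposal is correct and follows essentially the same route as the paper: produce measure-preserving maps so that the pulled-back sequence converges to $W$ in the cut \emph{metric}, apply Lemma~\ref{lm:spectrum}, and then transfer back using the invariance of spectra and multiplicities under measure-preserving maps. The paper's proof simply asserts the existence of maps $\varphi_n$ with $\cut{W_n^{\varphi_n}-W}\to 0$ in one line, treating the symmetry of $\delta_\square$ as a known fact, whereas you spell out (and worry about) this symmetry step; your discussion of it is accurate but more elaborate than necessary, and your alternative route via a varying-limit version of Lemma~\ref{lm:cor213} is not needed.
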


\begin{proof}
Since the digraphons $(W_n)_{n\in\NN}$ converge to $W$ in the cut distance,
there exist measure preserving maps $\varphi_n:[0,1]\to [0,1]$, $n\in\NN$, such that
$\cut{W_n^{\varphi_n}-W}$ converges to zero.
Since the spectra of the digraphons $W_n$ and $W_n^{\varphi_n}$
are the same for every $n\in\NN$ (including the algebraic multiplicities of non-zero elements),
the statement of the theorem follows from Lemma~\ref{lm:spectrum}.
\end{proof}

\section{Cycle densities}
\label{sec:cycle}

In this section, we relate the spectra of digraphons and the density of cycles as given in the next theorem.

\begin{theorem}
\label{thm:cycle}
Let $W$ be a digraphon.
For every $\ell\ge 3$, it holds that
\[t(C_\ell,W)=\sum_{\lambda\in\Sp(W)\setminus\{0\}}m_W(\lambda)\cdot\lambda^\ell\]
where the sum is absolutely convergent.
\end{theorem}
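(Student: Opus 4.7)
The plan is to prove the theorem by approximating $W$ with a convergent sequence of digraphs $(G_n)_{n\in\NN}$ and passing to the limit in the identity
\[ t(C_\ell,G_n) \;=\; \sum_{\nu\in\Sp(G_n)/|G_n|} m_{G_n}(|G_n|\nu)\,\nu^\ell, \]
which rewrites the traces of powers of $A_{G_n}$ in terms of normalized eigenvalues. The work splits into checking that the target series is absolutely convergent, using Theorem~\ref{thm:spectrum} for the large normalized eigenvalues, and providing a uniform tail bound for the small normalized eigenvalues.

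First I would verify absolute convergence of the right-hand side. Since $W$ is a digraphon, $|Wf(x)|\le\int|f|\le\|f\|_2$ gives $\|W\|_{2\to 2}\le 1$, so every $\lambda\in\Sp(W)$ satisfies $|\lambda|\le 1$; moreover $W$ is Hilbert-Schmidt with $\int_{[0,1]^2}W(x,y)^2\dd x\dd y\le 1$. By the Weyl inequality for compact operators, $\sum_{\lambda\in\Sp(W)\setminus\{0\}}m_W(\lambda)|\lambda|^2\le 1$, and since $\ell\ge 3$ and $|\lambda|\le 1$ one has $|\lambda|^\ell\le |\lambda|^2$, yielding $\sum_\lambda m_W(\lambda)|\lambda|^\ell\le 1$.

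Next, I would fix a convergent sequence of digraphs $(G_n)_{n\in\NN}$ with limit $W$. Writing $S_n$ for the spectral sum above and $S=\sum_{\lambda\in\Sp(W)\setminus\{0\}}m_W(\lambda)\lambda^\ell$, it suffices to show $S_n\to S$, since $S_n=t(C_\ell,G_n)\to t(C_\ell,W)$ by convergence. Fix $\eta>0$ and pick $\varepsilon>0$ small enough that $\sum_{0<|\lambda|\le\varepsilon}m_W(\lambda)|\lambda|^\ell<\eta$; the set $X=\{\lambda\in\Sp(W):|\lambda|>\varepsilon\}$ is then finite, as non-zero elements of $\Sp(W)$ are isolated. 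Applying Theorem~\ref{thm:spectrum} to each $\lambda\in X$ provides $\delta\in(0,\varepsilon/2)$ and $n_0$ so that for $n\ge n_0$ the balls $B_\delta(\lambda)$, $\lambda\in X$, are disjoint, avoid $\Sp(W)\setminus X$, the Hausdorff distance between $\Sp(G_n)/|G_n|$ and $\Sp(W)$ is at most $\delta$, and
\[ m_W(\lambda)=\sum_{\nu\in B_\delta(\lambda)\cap(\Sp(G_n)/|G_n|)} m_{G_n}(|G_n|\nu)\quad\text{for every }\lambda\in X. \]
Split $S_n=S_n^\flat+S_n^\sharp$, where $S_n^\flat$ is the contribution from $\nu\in\bigcup_{\lambda\in X}B_\delta(\lambda)$. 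For $S_n^\flat$, the bound $|\nu^\ell-\lambda^\ell|\le \ell\delta$ (since $|\nu|,|\lambda|\le 1+\delta$ and $|\nu-\lambda|<\delta$) gives $|S_n^\flat-\sum_{\lambda\in X}m_W(\lambda)\lambda^\ell|\le \ell\delta\sum_{\lambda\in X}m_W(\lambda)$, which is $<\eta$ for $\delta$ small. For $S_n^\sharp$, Hausdorff convergence forces every contributing $\nu$ to satisfy $|\nu|\le\varepsilon+\delta$, and using $|\nu|^\ell\le(\varepsilon+\delta)^{\ell-2}|\nu|^2$ together with the Schur-triangularization bound $\sum_\mu m_{G_n}(\mu)|\mu|^2\le \|A_{G_n}\|_F^2=|E(G_n)|\le|G_n|^2$ yields $|S_n^\sharp|\le(\varepsilon+\delta)^{\ell-2}$, which is $<\eta$ once $\varepsilon,\delta$ are chosen small enough (using $\ell-2\ge 1$). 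Combining these estimates shows $|S_n-S|<3\eta$ eventually, so $S_n\to S$, and therefore $t(C_\ell,W)=S$.

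The main obstacle is the uniform tail estimate for $S_n^\sharp$: Theorem~\ref{thm:spectrum} only controls the finitely many large eigenvalues, so one needs a quantitative bound on the contribution of all small normalized eigenvalues of $G_n$, uniform in $n$. The assumption $\ell\ge 3$ is precisely what makes this possible, since it lets one absorb one factor of $|\nu|$ into a factor that vanishes with $\varepsilon$ while the remaining $\sum m_{G_n}(|G_n|\nu)|\nu|^2$ is uniformly bounded via the Schur/Frobenius inequality.
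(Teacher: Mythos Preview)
Your proposal is correct and follows essentially the same route as the paper: approximate $W$ by a convergent sequence of digraphs, invoke Theorem~\ref{thm:spectrum} to match the finitely many large normalized eigenvalues of $G_n$ with those of $W$, and control the remaining small eigenvalues uniformly in $n$ via the Schur/Frobenius bound $\sum_\mu m_{G_n}(\mu)\,|\mu|^2\le\|A_{G_n}\|_F^2\le |G_n|^2$, using $\ell\ge 3$ to extract a factor $(\varepsilon+\delta)^{\ell-2}$ that vanishes. The only cosmetic difference is that you establish absolute convergence of the series for $W$ separately at the outset via Weyl's inequality, whereas the paper obtains it as a byproduct of its ``for every finite $S'\supseteq S$'' formulation of the limiting claim.
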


\begin{proof}
Fix a convergent sequence $(G_n)_{n\in\NN}$ of digraphs such that
the digraphon $W$ is a limit of the sequence $(G_n)_{n\in\NN}$;
a sequence of $W$-random digraphs with increasing number of vertices has this property with probability one.
Observe that it holds that
\[t(C_{\ell},W)=\lim_{n\to\infty}t(C_\ell,G_n)=\lim_{n\to\infty}\frac{\sum\limits_{\lambda\in\Sp(G_n)}m_{G_n}(\lambda)\cdot\lambda^{\ell}}{|G_n|^{\ell}},\]
and we write $L$ for the value of this limit.

Consider $n\in\NN$, and let $A_n$ be the adjacency matrix of the digraph $G_n$.
Since all entries of $A_n^TA_n$ are non-negative integers between $0$ and $|G_n|$,
the trace of $A_n^TA_n$ is at most $|G_n|^2$.
In particular, the sum of squares of the singular values of $A_n$ is at most $|G_n|^2$.
Since the sum of the $p$-th powers of the singular values
is an upper bound on the sum of the absolute values of the $p$-th powers of the eigenvalues for any $p>0$,
we obtain that
\begin{equation}
\sum\limits_{\lambda\in\Sp(G)}m_{G_n}(\lambda)\cdot\left|\lambda^2\right|\le |G_n|^2
\label{eq:singular}
\end{equation}
for every $n\in\NN$.

We prove the following claim, which yields the statement of the theorem:

\medskip

\noindent\emph{For every $\ell\ge 3$ and $\varepsilon>0$,
there exists a finite set $S\subseteq\Sp(W)\setminus\{0\}$ such that it holds that
\[\left|\sum_{\lambda\in S'}m_W(\lambda)\cdot\lambda^\ell-L\right|\le\varepsilon\]
for any finite set $S'\subseteq\Sp(W)\setminus\{0\}$ that contains $S$, i.e., $S\subseteq S'$.}

\medskip

We now prove this claim.
As the claim is trivial when $W=0$, we may assume that $W\not= 0$.
Fix $\ell\ge 3$ and $\varepsilon>0$ (without loss of generality, we assume that $\varepsilon\in (0,1)$), and
choose the set $S$ to be the set containing those $\lambda\in\Sp(W)$ with $|\lambda|\ge\varepsilon/4$.
The set $S$ is finite since every non-zero element of the closed set $\Sp(W)$ is isolated in $\Sp(W)$ and $\|W\|_{2\to 2}\le 1$.
We may assume that the set $S$ is non-empty (otherwise, we consider smaller $\varepsilon$).
Next fix any finite set $S'\subseteq\Sp(W)\setminus\{0\}$ that contains $S$, 
set $M$ to be the sum of $m_W(\lambda)$ of all $\lambda\in S'$, and
choose $\delta\in (0,1)$ such that
$\delta\le\frac{\varepsilon}{4M\cdot2^\ell}$ and
the balls $B_{\delta}(\lambda)$ and $B_{\delta}(\lambda')$ are disjoint for any distinct $\lambda,\lambda'\in S'$.
Theorem~\ref{thm:spectrum} implies that there exists $n_0\in\NN$ such that
the following holds for all $n\ge n_0$:
\begin{itemize}
\item for every $\lambda\in\Sp(G_n)$ with $|\lambda|\ge\varepsilon|G_n|/2$,
      there exists $\lambda'\in S\subseteq S'$ such that $\lambda/|G_n|\in B_{\delta}(\lambda')$, and
\item for every $\lambda\in S'$,
      the sum of the algebraic multiplicities of all $\lambda'\in\Sp(G_n)\cap B_{\delta|G_n|}(\lambda|G_n|)$
      is equal to $m_W(\lambda)$.
\end{itemize}
Finally, fix any $n\ge n_0$ such that
\[\left|\frac{\sum\limits_{\lambda\in\Sp(G_n)}m_{G_n}(\lambda)\cdot\lambda^{\ell}}{|G_n|^\ell}-L\right|\;\le\;\frac{\varepsilon}{4},\]
and let $S''$ be the set of all $\lambda\in\Sp(G_n)$ such that $\lambda/|G_n|\in B_{\delta}(\lambda')$ for some $\lambda'\in S'$.
Note that every $\lambda\in\Sp(G_n)$ with $|\lambda|\ge\varepsilon|G_n|/2$ is contained in $S''$.

We obtain using $\eqref{eq:singular}$ that
\begin{align*}
\left|\sum_{\lambda\in\Sp(G_n)\setminus S''}m_{G_n}(\lambda)\cdot\lambda^{\ell}\right|
  & \le \sum_{\lambda\in\Sp(G_n)\setminus S''} m_{G_n}(\lambda)\cdot|\lambda^{\ell}|\\
  & \le \sum_{\lambda\in\Sp(G_n)\setminus S''} m_{G_n}(\lambda)\cdot\frac{\varepsilon^{\ell-2}}{2^{\ell-2}}|G_n|^{\ell-2}\cdot|\lambda^2|\\
  & \le \frac{\varepsilon}{2}|G_n|^{\ell-2}\cdot\sum_{\lambda\in\Sp(G_n)\setminus S''} m_{G_n}(\lambda)\cdot|\lambda^2|
    \le \frac{\varepsilon}{2}|G_n|^{\ell},
\end{align*}
which yields that
\begin{equation}
\left|\sum\limits_{\lambda\in S''}\frac{m_{G_n}(\lambda)\cdot\lambda^{\ell}}{|G_n|^\ell}-L\right|\;\le\;\frac{3\varepsilon}{4}.
\label{eq:limit}
\end{equation}
Observe that $|(x+y)^\ell-x^{\ell}|\le2^\ell\delta$ for any complex numbers $x$ and $y$ such that $|x|\le 1$ and $|y|\le\delta$, and
recall that $2^\ell\delta\le\frac{\varepsilon}{4M}$.
Hence, we obtain that
\begin{align*}
\left|\sum_{\lambda\in S''}\frac{m_{G_n}(\lambda)\cdot\lambda^{\ell}}{|G_n|^\ell}-\sum_{\lambda\in S'}m_W(\lambda)\cdot\lambda^{\ell}\right|
  & = \left|\sum_{\lambda\in S'}\sum_{\lambda'\in B_{\delta|G_n|}(\lambda|G_n|)}m_{G_n}(\lambda')\cdot\left(\frac{(\lambda')^{\ell}}{|G_n|^\ell}-\lambda^{\ell}\right)\right|\\
  & \le \left|\sum_{\lambda\in S'}\sum_{\lambda'\in B_{\delta|G_n|}(\lambda|G_n|)}m_{G_n}(\lambda')\cdot\frac{\varepsilon}{4M}\right|\\
  & = \left|\sum_{\lambda\in S'}m_W(\lambda)\cdot\frac{\varepsilon}{4M}\right| = \frac{\varepsilon}{4},
\end{align*}
which combines with \eqref{eq:limit} to
\[\left|\sum_{\lambda\in S'}m_W(\lambda)\cdot\lambda^{\ell}-L\right|\le\varepsilon.\]
This finishes the proof of the claim and so of the theorem, too.
\end{proof}

\section{Concluding remarks}
\label{sec:concl}

We briefly discuss the relation of the cycle density and spectral properties in the case of limits of digraphs that
may contain parallel edges oriented in the opposite way.
Following the standard line of arguments,
we obtain that the natural limit object representing convergent sequences of such digraphs
is a pair $(W_1,W_2)$ of measurable functions $W_1,W_2:[0,1]^2\to [0,1]$ that satisfy that
$W_1(x,y)=W_1(y,x)$ and $W_1(x,y)+W_2(x,y)+W_2(y,x)\le 1$ for all $(x,y)\in [0,1]^2$ and
the $n$-vertex $W$-random digraph is obtained as follows.
Choose $x_1,\ldots,x_n$ uniformly in $[0,1]$ and
include a pair of edges oriented in the opposite way between the $i$-th and $j$-th vertices with probability $W_1(x_i,x_j)=W_1(x_j,x_i)$,
an edge directed from the $i$-th vertex to the $j$-th vertex with probability $W_2(x_i,x_j)$,
an edge directed from the $j$-th vertex to the $i$-th vertex with probability $W_2(x_j,x_i)$, and
no edge between the $i$-th vertex and the $j$-th vertex with probability $1-W_1(x_i,x_j)-W_2(x_i,x_j)-W_2(x_j,x_i)$,
with the choices for different pairs $(i,j)$ mutually independent.
The homomorphism density of a digraph $H$ (without parallel edges) is then equal to
\begin{equation}
\int\limits_{[0,1]^{V(H)}}\prod_{uv\in E(H)}\left(W_1(x_u,x_v)+W_2(x_u,x_v)\right)\dd x_{V(H)}.
\label{eq:tHWg}
\end{equation}
Consider a function $W':[0,1]^2\to [0,1]$ defined as $W'(x,y)=W_1(x,y)+W_2(x,y)$;
note that $W'$ need not be a digraphon as $W'(x,y)+W'(y,x)$ may exceed one but $\|W'\|_{\infty}\le 1$.
As the integral in \eqref{eq:tHWg} matches the integral in \eqref{eq:tHW} for $W'$,
we can use the same line of reasoning as in Sections~\ref{sec:spectrum} and~\ref{sec:cycle}
to derive that the homomorphism density of $C_{\ell}$ in $(W_1,W_2)$,
is equal to
\[\sum_{\lambda\in\Sp(W')\setminus\{0\}}m_{W'}(\lambda)\cdot\lambda^\ell\]
for every $\ell\ge 3$.
However,
this derivation fails for $\ell=2$,
which represents the limit density of the two-vertex digraphs with two parallel edges oriented in the opposite way,
as \eqref{eq:tHW} and \eqref{eq:tHWg} match only for digraphs with no parallel edges;
in fact, the identity need not hold as evidenced by one of the examples given below.

To illustrate the above presented concepts, we work out in detail the following example.
Let $A_n$ be the adjacency matrix of a uniformly chosen $n$-regular undirected graph with $2n$ vertices.
Note that $A_n$ is a symmetric real matrix,
which implies that all eigenvalues of $A_n$ are real and
their algebraic and geometric multiplicities are the same;
let $\lambda_1\ge\cdots\ge\lambda_{2n}$ be the eigenvalues of $A_n$.
It holds that $\lambda_1=n$ and
there exists a constant $\alpha$ (see \cite{TikY19}) such that
$|\lambda_i|\le\alpha\sqrt{n}$ for all $i\in [2n]\setminus\{1\}$ with probability tending to one.

Let $H^1_n$ be the digraph with $4n$ vertices that
is created from two disjoint copies of the vertex set of $A_n$ and
adding a pair of parallel edges oriented in the opposite way
between a vertex in the first copy and a vertex in the second copy whenever the vertices are joined by an edge in $A_n$.
Similarly, $H^2_n$ is the digraph created from two disjoint copies of the vertex set of $A_n$
with a directed edge from a vertex in the first copy to a vertex in the second copy whenever such an edge is present in $A_n$, and
with a directed edge from a vertex in the second copy to a vertex in the first copy whenever such an edge is not present in $A_n$.
That is, the adjacency matrix of $H^1_n$ is $\begin{bmatrix} 0 & A_n \\ A_n & 0 \end{bmatrix}$, and
the adjacency matrix of $H^2_n$ is $\begin{bmatrix} 0 & A_n \\ J_n-A_n & 0 \end{bmatrix}$,
where $J_n$ is the $2n\times 2n$ all one matrix.
Suppose that $\lambda_1\ge\cdots\ge\lambda_{2n}$ are the eigenvalues of $A_n$, and
observe that
\begin{align*}
\Sp(H^1_n) & =\{n,-n,\lambda_2,\ldots,\lambda_n,-\lambda_2,\ldots,-\lambda_{2n}\}\\
\Sp(H^2_n) & =\{n,-n,\lambda_2\iota,\ldots,\lambda_n\iota,-\lambda_2\iota,\ldots,-\lambda_{2n}\iota\}
\end{align*}
where $\iota$ is the imaginary unit.
Note that the limit $W^1$ of the sequence $(H^1_n)_{n\in\NN}$ is given as
\[W^1_i(x,y)=\begin{cases}
             1/2 & \mbox{if $i=1$, $x\in [0,1/2]$ and $y\in [1/2,1]$,}\\
             1/2 & \mbox{if $i=1$, $x\in [1/2,1]$ and $y\in [0,1/2]$, and}\\
	     0 & \mbox{otherwise.}
             \end{cases}\]
Similarly, the limit $W^2$ of the sequence $(H^2_n)_{n\in\NN}$ is given as
\[W^2_i(x,y)=\begin{cases}
             1/2 & \mbox{if $i=2$, $x\in [0,1/2]$ and $y\in [1/2,1]$,}\\
             1/2 & \mbox{if $i=2$, $x\in [1/2,1]$ and $y\in [0,1/2]$, and}\\
	     0 & \mbox{otherwise.}
             \end{cases}\]
While the limits $W^1$ and $W^2$ of the sequences $(H^1_n)_{n\in\NN}$ and $(H^2_n)_{n\in\NN}$ are different,
they yield the same $W':[0,1]^2\to [0,1]$ (recall that $W'=W_1+W_2$).
Note that $\Sp(W')=\{1/4,-1/4,0\}$, $m_{W'}(1/4)=1$ and $m_{W'}(-1/4)=1$, and
$\Sp(W')$ is also the limit of the normalized spectra of either $H^1_n$ or $H^2_n$.
Also note that the homomorphism density of $C_\ell$, $\ell\ge 3$, in either of $W^1$ and $W^2$
is equal $4^{-\ell}+(-4)^{-\ell}=2^{-2\ell+1}$ in line with the discussion above,
which also matches the limit homomorphism density of $C_\ell$ in either $H^1_n$ or $H^2_n$.
On the other hand, the limit of $t(C_2,H^1_n)$ is $1/4$ but $t(C_2,H^2_n)$ is $0$,
where $C_2$ is the two-vertex digraph with two parallel edges oriented in the opposite way.

\section*{Acknowledgement}

The authors would like to thank Igor Balla for discussions concerning the spectra of random graphs and
Svante Janson for discussions concerning the convergence of digraphons in cut metric and in $\|{-}\|_{2\to 2}$.
The authors would also like to thank the two anonymous reviewers for their insightful comments.

\bibliographystyle{bibstyle}
\bibliography{cyclelimit}

\end{document}